\newcommand{\one}{\mathbbm{1}} % One?
\renewcommand{\Re}{\operatorname{Re}} % Real part
\renewcommand{\Im}{\operatorname{Im}} % Imaginary part
\DeclareMathOperator{\diag}{diag}
\DeclareMathOperator{\sgn}{sgn}
\DeclareMathOperator{\spn}{span}
\DeclareMathOperator{\Tr}{Tr}
\newcommand{\GLnC}{\mathrm{GL}(n, \mathbb C)}
\newcommand{\Un}{\mathrm{U}(n)}
\newcommand{\Hermn}{\mathrm{Herm}(n)}
\newtheorem{theorem}{Theorem}[section]
\newtheorem{lemma}[theorem]{Lemma}
\newtheorem{corollary}[theorem]{Corollary}
\theoremstyle{definition}
\newtheorem{remark}[theorem]{Remark}
\numberwithin{equation}{section}
\title{Spherical functions approach to sums of random Hermitian matrices}
\author[1]{Arno B.J. Kuijlaars}
\author[2]{Pablo Rom\'an}
\affil[1]{Department of Mathematics, Katholieke Universiteit Leuven, Belgium,
arno.kuijlaars@kuleuven.be}
\affil[2]{CIEM, FaMAF, Universidad Nacional de C\'ordoba, Argentina,
roman@famaf.unc.edu.ar}
\date{}
\begin{document}

\maketitle

\begin{abstract}
We present an approach to sums of random Hermitian matrices via the
theory of spherical functions for the Gelfand pair $(\Un \ltimes \Hermn, \Un)$.
It is inspired by a similar approach of Kieburg and K\"osters for products of
random matrices. The spherical functions have determinantal expressions 
because of  the Harish-Chandra/Itzykson-Zuber integral formula. 
It leads to remarkably 
simple expressions for the  spherical transform and its inverse.
The spherical transform is applied to sums of unitarily invariant 
random matrices from polynomial ensembles and the subclass of 
polynomial ensembles of derivative type (in the additive sense), 
which turns out to be closed under  addition. 
We finally present additional detailed calculations for the 
sum with a random matrix from a Laguerre Unitary Ensemble.
\end{abstract}

\section{Introduction}

There is remarkable recent progress in the understanding of eigenvalues
and singular values of products of random matrices. This development
started with Akemann and Burda \cite{AkeBur2012} who found explicit
formulas for eigenvalues of products of complex Ginibre matrices
in terms of Meijer G-functions.
It was followed by the works \cite{AkeKieWei2013}, \cite{AkeIpsKie2013},
where the same was done for squared singular values of complex Ginibre
matrices, and again the formulas involve Meijer G-functions. 
Similar expressions were found for eigenvalues and squared singular values for
other products of truncated unitary matrices 
\cite{AdhRedRedSah2016, AkeBurKieNag2014}, and other random matrices.
The results for singular values were subsequently interpreted and extended as
transformations of polynomial ensembles in  
\cite{Kui2016, KuiSti2014, KieKuiSti2016}, see \cite{AkeIps2015} for a survey.

Recently, Kieburg and K\"osters \cite{KieKos2016a,KieKos2016b} presented
a natural harmonic analysis point of view on the results on 
products of random matrices in terms of  spherical functions associated 
with the Gelfand pair $(\GLnC,\Un)$. 
 It is the goal of this paper to give 
a similar interpretation for sums of random Hermitian matrices in terms 
of the Gelfand pair $(\Un \ltimes \Hermn, \Un)$.

In the rest of this introduction we summarize our results and state
the main theorem.
We consider probability density functions $f$ on the space $\Hermn$ of
$n \times n$ Hermitian matrices that are invariant under conjugation with 
unitary matrices. Thus $f(X) dX$ is a probability measure where
\begin{equation} \label{eq:dX} 
	dX = \prod_{j=1}^n dX_{jj} \, \prod_{j<k} d \Re X_{j,k} \, d \Im X_{j,k} 
	\end{equation}
is the flat Lebesgue measure on $\Hermn$, and we assume
\[ \int f(X) dX = 1, \qquad f(UXU^*) = f(X) \geq 0  \]
for every unitary matrix $U \in \Un$ and every Hermitian matrix $X \in \Hermn$. 
Then $f$ only depends on the eigenvalues
of $X$, say  $x_1, \ldots, x_n$, and we also write $f(x_1, \ldots, x_n)$.
By the Weyl integration formula we then have that
\begin{equation} \label{eq:Weyldensity} 
	\frac{\pi^{n(n-1)/2}}{\prod_{j=1}^n j!} \, f(x_1, \ldots, x_n) \, \Delta_n(x)^2
	 \end{equation}
is a probability density on $\mathbb R^n$ where
\[ \Delta_n(x) = \prod_{j<k} (x_k-x_j) = \det \left[ x_k^{j-1} \right]_{j,k=1}^n \]	
denotes the Vandermonde determinant.

The bounded spherical functions for $(\Un \ltimes \Hermn, \Un)$
are labelled by $s = (s_1, \ldots, s_n) \in \mathbb R^n$ and are given by
\begin{align} 
	\varphi_s(X) & = \int_{\Un} e^{i \Tr (SUXU^*)} dU 
	 = \left( \prod_{j=0}^{n-1} j! \right)
			\frac{\det\left[ e^{is_j x_k} \right]_{j,k=1}^n}
				{i^{n(n-1)/2} \Delta_n(s) \Delta_n(x) }  \label{eq:HCIZ}
	\end{align}
where $S = \diag(s_1, \ldots, s_n)$, $x = (x_1, \ldots, x_n)$  is
the vector of eigenvalues of the Hermitian matrix $X$, 
and $dU$ is the normalized Haar measure
on the unitary group $\Un$. This follows from  more general
results in \cite{BenJenRat1990}, see also \cite{Far2010} and the
discussion in Section 2 below.  The second identity in \eqref{eq:HCIZ} 
is the well-known Harish-Chandra/Itzykson-Zuber formula, see e.g.\ 
\cite[Proposition 11.6.1]{For2010}.

The corresponding spherical transform is $f \mapsto \widehat{f}$ 
where
\begin{equation} \label{eq:fhatX} 
	\widehat{f}(s) = \int f(X) \varphi_s(-X) \, dX 
	\end{equation}
with the integral over the set of $n \times n$ Hermitian matrices.	
In terms of an integral over eigenvalues this is by \eqref{eq:HCIZ}
and the Weyl integration formula
\begin{equation} \label{eq:fhat}
	\widehat{f}(s)  
	= \frac{(\pi i)^{n(n-1)/2}}{n! \, \Delta_n(s)}
		\int_{\mathbb R^n} f(x) 
		\, \det\left[e^{- i s_j x_k} \right]_{j,k=1}^n \,
		\Delta_n(x) \,	dx
			\end{equation}
where $x = (x_1, \ldots, x_n)$ and $dx = dx_1 \cdots dx_n$.
The prefactor in \eqref{eq:fhat} is such that 
\[ \widehat{f}(0,\ldots, 0) = \int f(X) \, dX = 1, \]
which should hold since $f$ is a probability density function
and $\varphi_{(0,\ldots,0)}(X) = 1$ for every $X \in \Hermn$.

In Section 2 below we show that there is an inversion formula
\begin{equation} \label{eq:finverse} 
	f(x) 
	=  \frac{(\pi i)^{-n(n-1)/2}}{(2\pi)^n n! \Delta_n(x)}
		\int_{\mathbb R^n} 
		\widehat{f}(s) \det\left[ e^{i s_j x_k} \right]_{j,k=1}^n
			\Delta_n(s) \, ds, 
			\end{equation}
which basically follows from the multidimensional inverse Fourier transform.

Our main result is the following.
\begin{theorem} \label{thm:main}
Suppose that $X$ and $Y$ are independent unitarily invariant random 
Hermitian matrices with probability densities $f_X$ and $f_Y$, respectively.
Let $f_{X+Y}$ be the probability  density of the sum $X+Y$. Then
\begin{equation} \label{eq:fXY} 
	\widehat{f}_{X+Y} = \widehat{f}_X \cdot \widehat{f}_Y 
	\end{equation}
and 
\begin{equation} \label{eq:fXYinverse}
	f_{X+Y}(x) 
	=  \frac{(\pi i)^{-n(n-1)/2}}{(2\pi)^n n! \Delta_n(x)}
		\int_{\mathbb R^n} 
		\widehat{f}_X(s) \, \widehat{f}_Y(s) \, 
		\det\left[ e^{i s_j x_k} \right]_{j,k=1}^n
			\Delta_n(s) \, ds. 
			\end{equation}
\end{theorem}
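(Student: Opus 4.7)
The plan is to exploit the unitary invariance of $f_X$ and $f_Y$ in order to reduce the spherical transform to the ordinary Fourier transform on the real vector space $\Hermn \cong \mathbb{R}^{n^2}$, where the classical Euclidean convolution theorem does the work.

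First, since $X$ and $Y$ are independent, the density of their sum on $\Hermn$ is the convolution $f_{X+Y}(Z) = \int f_X(X) f_Y(Z-X)\, dX$. The invariance of the flat Lebesgue measure $dX$ under $X \mapsto U X U^*$, together with the invariance of $f_X$ and $f_Y$, shows that $f_{X+Y}$ is itself unitarily invariant, so the left-hand side of \eqref{eq:fXY} makes sense.

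Next, I would rewrite $\widehat{f}(s)$ for any unitarily invariant density $f$. Starting from \eqref{eq:fhatX} and the integral formula in \eqref{eq:HCIZ},
\[
\widehat{f}(s) = \int_{\Hermn} f(X) \int_{\Un} e^{-i\Tr(SUXU^*)}\, dU\, dX,
\]
I swap the order of integration by Fubini and, for each fixed $U$, substitute $Y = UXU^*$. Because $dX$ is $\Un$-invariant and $f(U^* Y U) = f(Y)$, the inner integral becomes $\int f(Y) e^{-i\Tr(SY)}\, dY$, which is independent of $U$. Hence
\[
\widehat{f}(s) = \int_{\Hermn} f(X)\, e^{-i\Tr(SX)}\, dX,
\]
i.e.\ the ordinary Fourier transform of $f$ on $\mathbb{R}^{n^2}$ evaluated at $S = \diag(s_1, \ldots, s_n)$. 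Applying this to $f_{X+Y}$, $f_X$, and $f_Y$, the identity \eqref{eq:fXY} then follows directly from the standard Euclidean convolution theorem on $\Hermn$.

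The inversion formula \eqref{eq:fXYinverse} is then immediate: substitute $\widehat{f}_{X+Y} = \widehat{f}_X \cdot \widehat{f}_Y$ into the general inversion formula \eqref{eq:finverse} applied to $f_{X+Y}$. The only real obstacle is the routine verification that Fubini can be applied and that $f_X$ and $f_Y$ (and hence their convolution) are sufficiently integrable for both the spherical transform and its inverse to be well defined; the conceptual content is simply that unitary invariance collapses the $\Un$-average in $\varphi_s$, so that on the class of $\Un$-invariant densities the spherical transform is nothing but a restriction of the Fourier transform to diagonal $S$.
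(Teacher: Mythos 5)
Your proof is correct, and it takes a genuinely different and more elementary route than the paper for the multiplicativity identity \eqref{eq:fXY}. The paper derives \eqref{eq:fXY} from the general Gelfand-pair machinery: Corollary~\ref{cor:fsum} identifies $f_{X+Y}$ with the bi-$K$-invariant convolution $f_X \ast f_Y$, and then the abstract theorem \eqref{eq:fgconv} that the spherical transform carries convolution to pointwise multiplication does the work. You instead observe that for a unitarily invariant density $f$, the $\Un$-average built into $\varphi_s$ collapses after Fubini and a change of variables, so that $\widehat{f}(s) = \int_{\Hermn} f(X)\, e^{-i\Tr(SX)}\, dX$, i.e.\ the spherical transform is nothing but the restriction to diagonal arguments of the ordinary Fourier transform on $\Hermn \cong \mathbb{R}^{n^2}$. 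From there \eqref{eq:fXY} is the classical Euclidean convolution theorem. This is a clean and self-contained argument (and the change of variables is indeed measure-preserving, since conjugation by a fixed unitary is a linear isometry of $\Hermn$). The trade-off is structural rather than logical: the paper's route showcases the Gelfand-pair formalism, which is the conceptual point of the whole paper (and generalizes to other pairs), whereas your route is more concrete but hides that structure. One small caveat: you invoke the inversion formula \eqref{eq:finverse} as given, but in the paper that formula is actually proved inside the proof of Theorem~\ref{thm:main} (by expanding the determinants and using $n$-dimensional Fourier inversion). Your identification of $\widehat{f}$ with the Fourier transform on $\Hermn$ could equally be used to derive \eqref{eq:finverse} directly from $n^2$-dimensional Fourier inversion followed by Weyl integration, which would make your argument fully self-contained.
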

The formula \eqref{eq:fXYinverse} is of course the inversion formula
\eqref{eq:finverse} applied to \eqref{eq:fXY}.

The formulas \eqref{eq:fXY} and \eqref{eq:fXYinverse} are analogues of
familiar properties of the usual Fourier transform. We emphasize that
they hold for random Hermitian matrices. The situation for real
symmetric matrices is not so nice, because of the lack of a 
Harish-Chandra/Itzykson-Zuber formula  \eqref{eq:HCIZ} for the 
corresponding integral over the orthogonal group.

The proof of Theorem \ref{thm:main} is in Section 2. We discuss
the Gelfand pair $(\Un \ltimes \Hermn, \Un)$ and its relevance for
sums of random Hermitian matrices. Then we recall the general concept
of a Gelfand pair, the notion of spherical functions and spherical
transform. Then we specialize again to the case $(\Un \ltimes \Hermn, \Un)$
and we show that the bounded spherical functions are given by
the functions \eqref{eq:HCIZ}.
We compute the spherical transform \eqref{eq:fhat} and its 
inverse \eqref{eq:finverse} and then finish the proof
of Theorem \ref{thm:main}.

In Section 3 we compute the spherical transform $\widehat{f}$
in certain situations. The result is explicit for the
probability density functions of the GUE and LUE random matrix ensembles.
For a polynomial ensemble the spherical transform is a ratio
of determinants \eqref{eq:fPEhat} with a Vandermonde determinant in
the denominator. From Theorem \ref{thm:main} we then 
find that the sum of a polynomial ensemble with a GUE or LUE matrix
is again a polynomial ensemble, see Corollaries \ref{cor:GUEsum} and
\ref{cor:LUEsum}. The result for GUE was already noted in 
\cite{ClaKuiWan2015}, while it is a new result for the sum with an LUE matrix.
We end Section 3 with polynomial ensembles of derivative type,
following the similar notion introduced in \cite{KieKos2016a,KieKos2016b}
in a multiplicative setting.

In Section 4 we provide more information on the sum of a polynomial
ensemble with an LUE matrix. We give a second proof of 
Corollary \ref{cor:LUEsum}, as well as tranformation results for the
correlation kernel and the biorthogonal functions that are associated
with a polynomial ensemble.

\begin{remark}
The spherical functions for the Gelfand pair $(\GLnC, \Un)$
can be written as
\[ \varphi_s(X) = \left(\prod_{j=0}^{n-1} j! \right) 
	 \frac{\det \left[ x_j^{s_k} \right]_{j,k=1}^n}{\Delta_n(s) \Delta_n(x)},
	 	\qquad X \in \GLnC, \]
where $x_1, \ldots, x_n$ are the eigenvalues of $X^*X$,
see \cite{GelNai1957,KieKos2016b}.
These functions are used in \cite{KieKos2016b} for products
of random matrices, in a similar way as \eqref{eq:HCIZ} will be used
for sums of random matrices in this paper. 
\end{remark}

\section{Spherical functions}

\subsection{The Gelfand pair}
The Gelfand pair $(\Un \ltimes \Hermn, \Un)$ consists of the semidirect
product $G = \Un \ltimes \Hermn$ of the unitary group with the real
vector space $\Hermn$ of complex Hermitian $n \times n$ matrices
and the compact subgroup $K =  \Un \times \{0\} \simeq \Un$.
A unitary matrix $U \in \Un$ acts on $\Hermn$ by conjugation 
$A \mapsto UA U^*$. The composition law on $G$ is
\begin{equation} \label{eq:grouplaw} 
	(U_1, A) \cdot (U_2, B) = (U_1U_2, A + U_1 B U_1^*)
	\end{equation}
for $U_1, U_2 \in \Un$ and $A,B \in \Hermn$.

A function $f : G \to \mathbb C$ is bi-$K$-invariant if it is invariant under
left and right multiplication with elements of $K$. In our situation it means
that $f(U,A)$ only depends on the eigenvalues of the Hermitian matrix $A$.
For a bi-$K$-invariant function $f$ we therefore simply write $f(A)$ instead 
of $f(U,A)$,
or even $f(x_1, \ldots, x_n)$ where $x_1, \ldots, x_n$ are the eigenvalues of $A$,
see also the introduction.

There is a convolution product for functions on $G$, which for bi-$K$-invariant
functions reduces to (it is a simple verification)
\begin{equation} \label{eq:convolution} 
	(f \ast g)(A) = \int_{\Hermn} f(X) g(A-X) \, dX
	\end{equation}
where $dX$ is the Lebesgue measure on $\Hermn$, see  \eqref{eq:dX}.
From \eqref{eq:convolution} it is obvious that $f \ast g = g \ast f$ for 
bi-$K$-invariant functions, which is the property that defines a Gelfand 
pair $(G,K)$. 

The connection to random matrices is in the following corollary. 
\begin{corollary} \label{cor:fsum}
If $f_X$ and $f_Y$  are the probability densities for independent 
bi-$K$-invariant random Hermitian matrices $X$ and $Y$
then $f_X \ast f_Y = f_{X+Y}$ is the probability density for 
the sum $X + Y$.
\end{corollary}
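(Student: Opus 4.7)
The plan is to reduce the statement to the standard fact that the density of a sum of independent $\mathbb{R}^d$-valued random variables (here the ambient space is $\Hermn \cong \mathbb{R}^{n^2}$) is the convolution of their densities, and then check that the convolution given in \eqref{eq:convolution} is in fact the convolution in this classical sense.

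First I would set up the expectation side. For any bounded continuous test function $\phi$ on $\Hermn$, independence gives
\[
	\mathbb{E}[\phi(X+Y)]
	= \int_{\Hermn} \int_{\Hermn} \phi(A+B) f_X(A) f_Y(B) \, dA \, dB.
\]
Next I would perform the change of variables $(A,B) \mapsto (A, C)$ with $C = A+B$. The flat Lebesgue measure $dX$ on $\Hermn$ defined in \eqref{eq:dX} is translation invariant, so the Jacobian is $1$, and the above integral becomes
\[
	\int_{\Hermn} \phi(C) \left( \int_{\Hermn} f_X(A) f_Y(C-A) \, dA \right) dC
	= \int_{\Hermn} \phi(C) (f_X \ast f_Y)(C) \, dC,
\]
using the definition of convolution \eqref{eq:convolution}. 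Since this holds for all test functions $\phi$, the density of $X+Y$ must equal $f_X \ast f_Y$ almost everywhere.

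It remains to verify that $f_X \ast f_Y$ is genuinely bi-$K$-invariant, so that writing it as a function of eigenvalues is legitimate and it qualifies as a density of the stated form. For $U \in \Un$ and $A \in \Hermn$,
\[
	(f_X \ast f_Y)(UAU^*)
	= \int_{\Hermn} f_X(X) f_Y(UAU^* - X) \, dX.
\]
Substituting $X = UX'U^*$ and using that $dX$ is invariant under $X \mapsto UXU^*$ (the Jacobian of conjugation is $1$, since conjugation is an orthogonal transformation of $\Hermn$ with respect to the trace inner product), together with the $\Un$-invariance of $f_X$ and $f_Y$, shows that $(f_X \ast f_Y)(UAU^*) = (f_X \ast f_Y)(A)$. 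Positivity and normalization to total mass one are inherited immediately from $f_X$ and $f_Y$ by Fubini.

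No single step is a real obstacle; the only mild point to be careful about is making sure the two notions of convolution agree, i.e., that the bi-$K$-invariant reduction \eqref{eq:convolution} of the group convolution on $G = \Un \ltimes \Hermn$ coincides with the ordinary additive convolution of densities on the vector space $\Hermn$. This is clear directly from the composition law \eqref{eq:grouplaw}, which makes the $\Hermn$-component add, so the corollary is essentially a restatement of \eqref{eq:convolution} in probabilistic language.
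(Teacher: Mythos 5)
Your proof is correct and supplies exactly the standard argument that the paper leaves unstated (the corollary is given without proof, being treated as immediate once \eqref{eq:convolution} is identified with ordinary convolution on the vector space $\Hermn$). Your test-function computation, the translation-invariance of $dX$, and the verification that $f_X \ast f_Y$ inherits bi-$K$-invariance are all the right details; nothing is missing.
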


\subsection{Spherical functions: general concepts}
We follow the exposition of van Dijk \cite{vDij2009}.

In this subsection  we let $G$ be a general locally compact group with a 
compact subgroup $K$. Later we will specialize it to the case
$G = \Un \ltimes \Hermn$ and $K= \Un$.
In the general setting we use lower case letters for elements 
of $G$ and $K$. Let $dx$ be left Haar measure on $G$ and let $dk$
be normalized Haar measure on $K$.

Let $C_c^\#(G)$ be the space
of continuous compactly supported complex valued functions on $G$
that are bi-invariant with respect to $K$. The group structure
on $G$  gives rise to a convolution product 
\[ (f \ast g)(x) = \int_G f(y) g(y^{-1}x) \, dy \]
for functions $f$ and $g$ on $G$. When restricted to $C_c^\#(G)$
it turns $C_c^\#(G)$ into a convolution algebra.
The pair $(G,K)$ is called a Gelfand pair if $C_c^\#(G)$ is commutative.

A spherical function for a Gelfand pair $(G,K)$ is a nonzero continuous 
function $\varphi$ on $G$ such that
\begin{equation} \label{eq:spherical}
	\int_K \varphi(xky) dk = \varphi(x) \varphi(y), \qquad x, y \in G.
\end{equation}
It is equivalent to saying that the functional $\chi$ on $C_c^\#(G)$ defined by
\[ \chi(f) = \int_G f(x) \varphi(x^{-1}) dx \]
is a non-trivial character, i.e.,
\[ \chi(f \ast g) = \chi(f) \chi(g), \]
see \cite[Proposition 6.1.5]{vDij2009}. A spherical function $\varphi$ is 
bi-$K$-invariant  and $\varphi(e) = 1$, where $e$ is the unit element in $G$. 

A locally integrable function $\varphi : G \to \mathbb C$ is positive-definite if
\[ \int_G \int_G \varphi(x^{-1}y) f(x) \overline{f(y)} dx dy \geq 0 \]
for every continuous function $f$ with compact support on $G$.
If $\pi$ is a unitary representation of $G$ on a Hilbert space $H$, and 
$\varepsilon \in H$,
then $x \mapsto \langle \varepsilon, \pi(x) \varepsilon \rangle$ is
a bounded continuous positive-definite function on $G$, and every bounded
continuous positive-definite function is obtained this way. We may assume in
addition that $\varepsilon$ is a cyclic vector, see \cite[Remark 5.1.7]{vDij2009}.

A continuous positive-definite function $\varphi$ on $G$ that is bi-$K$-invariant
 with $\varphi(e) = 1$
is a spherical function if and only if the associated unitary representation is
irreducible \cite[Theorems 5.3.2 and 6.2.5]{vDij2009}.

Let $Z$ be the set of positive-definite spherical functions. Such functions are
 automatically continuous and bounded.
Then the spherical transform $\widehat{f}$ of a function $f \in L^1(G)^\#$ 
is defined as
\begin{equation} \label{eq:fhatgeneral} 
	\widehat{f} : Z \to \mathbb C : \quad \varphi \in Z \mapsto 
	\widehat{f}(\varphi) = \int_G f(x) \varphi(x^{-1}) dx, 
	\end{equation}
see \cite[Definition 6.4.3]{vDij2009} where it is called the Fourier transform.
There is a natural topology on $Z$, which is locally compact.
Then by \cite[page 84]{vDij2009},  $\widehat{f}$ is a continuous function on $Z$
that vanishes at infinity, and
\[  | \widehat{f}(\varphi)| \leq  \int_G |f(x)| dx, \qquad \varphi \in Z, \]
the transformation $f \mapsto \widehat{f}$ is linear with
\begin{equation} \label{eq:fgconv} 
	\widehat{f \ast g} = \widehat{f} \, \widehat{g}, \qquad f,g \in L^1(G)^\#.
	\end{equation}

There is an inversion formula according to which we can recover $f$ from 
$\widehat{f}$. Namely, there is a unique measure $\nu$ on $Z$ 
(sometimes called Plancherel measure)
such that for bi-$K$-invariant functions $f$,
\[ f(x) = \int_Z \varphi(x) \widehat{f}(\varphi) d \nu(\varphi), \]
and there is a Plancherel formula
\[ \int_G |f(x)|^2 dx = \int_Z | \widehat{f}(\varphi)|^2 d\nu(\varphi), 
\qquad f \in L^2(G)^\#, \]
see \cite[Theorems 6.4.5 and 6.4.6]{vDij2009}. 

\subsection{Spherical functions in special case}
Suppose the Gelfand pair takes the form $(K \ltimes N, K)$
where $K$ is a compact group that acts on $N$.
Then bi-$K$-invariant functions on $G = K \ltimes N$ are naturally
identified with functions on $N$ that are invariant under the 
action of $K$. 
This situation was considered by Benson, Jenkins and Ratcliff
\cite{BenJenRat1990}. When $N$ is a nilpotent Lie group,
they characterized the spherical functions as follows.
\begin{lemma} \cite[Lemma 8.2 and Corollary 8.4]{BenJenRat1990}  \label{lem:BJR}
Suppose $\varphi$ is a bounded spherical function on $N$, where $N$ is
a nilpotent Lie group. Then $\varphi$ is positive-definite, and
there exist an irreducible unitary representation $\pi$ of $N$ 
on a Hilbert space $H_{\pi}$ 
and a unit vector $\xi \in H_{\pi}$ such that
\begin{equation} \label{eq:BJRlemma} 
	\varphi(x) = \int_K  \langle \pi(k \cdot x) \xi, \xi \rangle dk 
	\end{equation}
for each $x \in N$.
\end{lemma}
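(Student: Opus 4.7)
The statement combines two assertions: automatic positive-definiteness of bounded spherical functions (which is special to the nilpotent setting), and an explicit integral representation coming from Mackey theory for the semidirect product $G = K \ltimes N$. The plan is to handle them in this order.

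For positive-definiteness, I would first show that the character
\[ \chi_\varphi(f) = \int_G f(x)\,\varphi(x^{-1})\,dx \]
on $C_c^\#(G)$, which is nontrivial by the functional equation \eqref{eq:spherical}, is hermitian with respect to the natural involution $f^\ast(x) = \overline{f(x^{-1})}$. A Gelfand--Raikov-type argument then upgrades $\varphi$ to a positive-definite function. The nilpotency of $N$ is used here via Kirillov's orbit picture: every coadjoint orbit of a nilpotent Lie group is invariant under negation on $\mathfrak n^*$, which translates into an involution on $\widehat N$ that intertwines complex conjugation of matrix coefficients, and this is precisely what is needed to make $\chi_\varphi$ hermitian.

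Once $\varphi$ is positive-definite, the GNS construction produces a cyclic irreducible unitary $G$-representation $(\pi_G, H, \varepsilon)$ with $K$-fixed unit cyclic vector $\varepsilon$ such that $\varphi(g) = \langle \pi_G(g)\,\varepsilon, \varepsilon\rangle$. Restricting $\pi_G$ to the normal subgroup $N$ and applying Mackey's little-group analysis for $G = K \ltimes N$ with $K$ compact yields a direct-integral decomposition
\begin{equation*}
	\pi_G\big|_N \;\cong\; \int_{K/K_\pi}^{\oplus} \pi^{k}\, dk, \qquad \pi^k(x) := \pi(k \cdot x),
\end{equation*}
where $\pi \in \widehat N$ lies on the $K$-orbit attached to $\pi_G$ and $K_\pi$ is its stabilizer in $K$. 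The $K$-invariance of $\varepsilon$ forces the corresponding section to be the constant $k \mapsto \xi$ for some unit vector $\xi \in H_\pi$, and evaluating $\langle \pi_G(x)\,\varepsilon, \varepsilon\rangle$ in this model recovers exactly \eqref{eq:BJRlemma}.

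The main obstacle is the second step: a rigorous direct-integral decomposition of $\pi_G|_N$, including a measurable choice of $\pi^{k}$ along the orbit and the identification of the $K$-fixed cyclic vector with a constant section. Both ingredients lean on $N$ being nilpotent, since Kirillov theory furnishes a clean orbit parameterisation of $\widehat N$ and nilpotent groups are of type I, so Mackey's theorem applies in its standard form. The first step is shorter but is not purely formal: for a general Gelfand pair $(K \ltimes N, K)$ bounded spherical functions need not be positive-definite, and some genuine use of nilpotency is required.
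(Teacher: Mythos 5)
This lemma is cited from Benson, Jenkins, and Ratcliff \cite{BenJenRat1990} (their Lemma~8.2 and Corollary~8.4) and is not proved in the paper, so there is no in-paper argument to compare against; the comparison must be with the structure of the cited result and with mathematical correctness. Your overall framing is right: the statement has two parts, automatic positive-definiteness (special to the nilpotent setting, false for general Gelfand pairs) and an integral representation obtained from the Mackey machine for $K \ltimes N$. The Mackey half of your sketch, while leaving substantial work to the reader (a measurable field of representations $\pi^k$, the identification of a $K$-fixed cyclic vector with a constant section, the role of type~I/CCR), is at least a plausible route to Corollary~8.4.

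The argument you give for positive-definiteness, however, rests on a false claim. You assert that every coadjoint orbit of a nilpotent Lie group is invariant under the negation $\ell \mapsto -\ell$ on $\mathfrak n^*$. This is already wrong for $N = \mathbb R^n$, where coadjoint orbits are singletons and negation moves $\{\ell\}$ to $\{-\ell\}$; and for the Heisenberg group negation exchanges the hyperplane orbits $\ell(Z)=c$ and $\ell(Z)=-c$. What is true is that negation induces the contragredient involution $\pi \mapsto \bar\pi$ on $\widehat N$, but that involution exists for any locally compact group and gives no leverage on its own. Moreover, the deduction you would need, that hermitianness of the character $\chi_\varphi$ already forces $\varphi$ to be positive-definite, is not a consequence of Gelfand--Raikov; hermitian (e.g.\ real-valued) spherical functions need not be positive-definite in general. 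So the first step, as written, does not establish positive-definiteness, and this is precisely the step where the nilpotency of $N$ has to do genuine work. The safest repair is to prove the integral representation \eqref{eq:BJRlemma} first (or simultaneously), from which hermitianness and positive-definiteness of $\varphi$ follow by a direct computation using $\pi(x^{-1}) = \pi(x)^*$ and the fact that $K$ acts by automorphisms; this is closer to how the result is actually obtained in \cite{BenJenRat1990}.
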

	
Lemma \ref{lem:BJR} applies to the Gelfand pair $(\Un \ltimes \Hermn, \Un)$
since the vector space $\Hermn$ is an abelian group, and thus nilpotent.
The pairing $\langle A, B \rangle = \Tr (AB)$  is a real inner product 
on $\Hermn$, and  all irreducible unitary representations of $\Hermn$ are 
given by $\pi_S:\Hermn\to \mathbb{C}$, where
\[ \pi_S: X\mapsto e^{i \langle S, X \rangle} = e^{i \Tr(SX)},\qquad 
S \in \Hermn. \]
From Lemma \ref{lem:BJR} we thus obtain that all bounded positive-definite 
spherical functions are given by
	\begin{align} \nonumber
 \varphi_S(A) & = \int_{\Un} \pi_S(UAU^*) dU \\
 	&  = \int_{\Un} e^{i \Tr (S U A U^*)} dU,
 	\qquad A \in \Hermn, \label{eq:sphericalintegral}
 	\end{align}
where $dU$ denotes the normalized Haar measure on $\Un$, and $S \in \Hermn$. 

\begin{remark}
In fact we can  verify the property \eqref{eq:spherical} directly from
 \eqref{eq:sphericalintegral}, for any fixed $n \times n$ matrix $S$.
If $x = (U_1,A) \in G$, $k = (V,0) \in K$ and $y= (U_2,B) \in G$, then by the
 composition rule \eqref{eq:grouplaw} we have
 \[ xky = (U_1VU_2, A + U_1V B V^* U_1^*). \]
 The definition \eqref{eq:sphericalintegral} only uses the Hermitian part of $xky$
 and we have for Hermitian matrices $A$ and $B$,
 \begin{multline} \label{eq:sphericalcheck}
 \int_{\Un} \varphi_S(A+U_1V B V^*U_1^*) dV  = 
 \int_{\Un} \int_{\Un} e^{i \Tr (S U (A+U_1V B V^*U_1^*) U^*)} dU dV \\
  = \left(\int_{\Un} e^{i \Tr SUAU^*} dU \right)
 	\left(\int_{\Un} \int_{\Un} e^{i \Tr SUU_1 V B V^*U_1^*U^*} dV dU \right) 
 \end{multline}
 where we used the linearity of the trace and Fubini's theorem.
 By the invariance of Haar measure and \eqref{eq:sphericalintegral}
\[  	\left(\int_{\Un} \int_{\Un} e^{i \Tr SUU_1 V B V^*U_1^*U^*} dV dU \right) = \varphi_S(B)
\] 
and by \eqref{eq:sphericalcheck} we find indeed
\[ \int_{\Un} \int_{\Un} e^{i \Tr (S U (A+U_1V B V^*U_1^*) U^*)} dU dV   =
	 \varphi_S(A) \varphi_S(B). \]
It is also clear from \eqref{eq:sphericalintegral} that $\varphi_S(0) = 1$ and thus
$\varphi_S$ is a spherical function. It is bounded if and only if $S \in \Hermn$.
\end{remark}

For every matrix $S\in \Hermn$ there exists $U\in \Un$ such that 
$USU^*=\diag(s_1,\ldots,s_n)$ where $s_j\in \mathbb{R}$ for $j =1, \ldots, n$. 
By \eqref{eq:sphericalintegral} and the invariance of the
Haar measure, the spherical function only depends on the eigenvalues of $S$
and we write $\varphi_s$ instead of $\varphi_S$ where $s = (s_1,\ldots, s_n)$.
Moreover, $\varphi_{(s_1,\ldots,s_n)}=\varphi_{(s_{\sigma(1)},
\ldots,s_{\sigma(n)})}$ for any permutation $\sigma\in S_n$. 
Therefore the set $Z$ of positive-definite spherical functions can be 
identified with $\mathbb{R}^n \slash S_n$.
 
By the Harish-Chandra/Itzykson-Zuber formula the spherical function 
\eqref{eq:sphericalintegral} takes the determinantal form \eqref{eq:HCIZ}.
The formula should be understood in a limiting sense
if some of the $x_j$'s and/or some of the $s_j$'s coincide.

\subsection{Proof of Theorem \ref{thm:main}}

The spherical transform \eqref{eq:fhatgeneral} of a  function $f \in L^1(G)^\#$ 
is in our special case $(G,K) = (\Un \ltimes \Hermn, \Un)$,
\begin{equation} \label{eq:sphericaltrans} 
	\widehat{f}(s) = \int_{\Hermn} f(A) \varphi_s(-A) \, dA,
\end{equation} 
which we view as a function on $\mathbb R^n$ that is invariant under
permutation of coordinates (instead of a function on $Z = \mathbb R^n \slash S_n$).
If $f$ has compact support, then \eqref{eq:sphericaltrans} 
is defined for all $s \in \mathbb C^n$. 
As an integral over eigenvalues the spherical transform 
\eqref{eq:sphericaltrans} it gives us \eqref{eq:fhat} by the Weyl
integration formula.
Then \eqref{eq:fXY} in Theorem \ref{thm:main}  follows because of Corollary
 \ref{cor:fsum} and \eqref{eq:fgconv}.
	
Next, we expand the determinant $\det\left[e^{- s_j x_k}\right]_{j,k=1}^n$ 
to obtain from \eqref{eq:fhat} that 
\begin{align*}
	\widehat{f}(s) \Delta_n(s) = \frac{(\pi i)^{n(n-1)/2}}{n!} 
	\sum_{\sigma \in S_n} \sgn(\sigma) 	
	\int_{\mathbb R^n} f(x) e^{- i s_j x_{\sigma(j)}} \Delta_n(x) \, dx, 
	\end{align*}
where the sum is over permutations $\sigma \in S_n$.
Because $f(x) = f(x_1, \ldots, x_n)$ is invariant
under permutations of coordinates, while $\Delta_n(x)$ changes sign for odd
 permutations, each permutation $\sigma$ has the same contribution. Thus 
\[  \widehat{f}(s) \Delta_n(s) = 
	(\pi i)^{n(n-1)/2} \int_{\mathbb R^n} f(x)  e^{-i s_j x_j} 
	\, \Delta_n(x) \, dx \]
which is the usual $n$-dimensional Fourier transform of $f(x) \Delta_n(x)$. 
Thus by Fourier inversion
\begin{align*} 
	f(x) \Delta_n(x) & =  \frac{1}{(2 \pi)^n (\pi i)^{n(n-1)/2}}  
	\int_{\mathbb R^n} 
	 \widehat{f}(s)   e^{ i s_j x_j} \, \Delta_n(s) \, ds 
		\end{align*}
	with $ds = ds_1 \cdots ds_n$.
Now $\widehat{f}(s)$ is invariant under permutations of $s_1, \ldots, s_n$. 
Then by similar argument as above, we can write
\begin{align*} f(x) \Delta_n(x) & =  
	\frac{1}{(2\pi)^n (\pi i)^{n(n-1)/2} n!}  
	\sum_{\sigma \in S_n} \sgn(\sigma) 	\int_{\mathbb R^n} 
		\widehat{f}(s)  e^{i s_{\sigma(j)} x_j} \, \Delta_n(s) \, ds \\
	& = \frac{1}{(2\pi)^n (\pi i)^{n(n-1)/2} n!}  
	\int_{\mathbb R^n} \widehat{f}(s) 
	\det\left[ e^{s_k x_j} \right]_{j,k=1}^n \, \Delta_n(s) \, ds  
\end{align*}
which is \eqref{eq:finverse}.
This proves the inversion formula \eqref{eq:fXYinverse} 
and the proof of Theorem \ref{thm:main} is complete.
\qed

\begin{remark}
Recalling the expression \eqref{eq:HCIZ} of the spherical function, 
and writing $f(A)$ instead of $f(x)$, we have
\[ f(A) = \frac{1}{(2\pi)^n (\pi i)^{n(n-1)/2}  \prod_{j=0}^{n} j!}  
\int_{\mathbb R^n} \widehat{f}(s)  \varphi_s(A) \, \Delta_n(s)^2 \, ds. \]
So the Plancherel measure on $Z$ is proportional to $\Delta_n(s)^2 \, ds$.
\end{remark}

\section{Computation of special cases}

The integral \eqref{eq:fhat} can be evaluated explicitly in certain cases. 
The Andreief formula \cite{DeiGio2009}
\begin{equation} \label{eq:Andreief} 
	\int \det \left[f_j(x_k) \right]_{j,k=1}^n 
		\det \left[g_j(x_k) \right]_{j,k=1}^n dx_1 \cdots dx_n  
	= n! \det \left[ \int f_j(x) g_k(x) dx \right]_{j,k=1}^n 
	\end{equation}
will be useful in the computations. 

\subsection{Gaussian Unitary Ensemble}
The density of the Gaussian Unitary Ensemble (GUE) is 
\begin{equation} \label{eq:fGUE} 
	f_{GUE}(X) = \frac{1}{2^{n/2} \pi^{n^2/2}} e^{- \frac{1}{2} \Tr X^2}.
	\end{equation}
In terms of eigenvalues  we have 
\[ f_{GUE}(x_1, \ldots, x_n) = 
	\frac{1}{2^{n/2} \pi^{n^2/2}} \prod_{k=1}^n e^{-\frac{1}{2} x_k^2}, \]
and then by \eqref{eq:fhat} and the Andreief formula \eqref{eq:Andreief},
\begin{equation} \label{eq:fGUEhat1} 
	\widehat{f}_{GUE}(s)
	=  \frac{ i^{n(n-1)/2}}{(2\pi)^{n/2} \Delta_n(s)} 
	 \det \left[\int_{-\infty}^{\infty} e^{-\frac{1}{2}x^2} e^{-i s_jx} x^{k-1} dx
	  \right]_{j,k=1}^n. \end{equation} 
The integrals can be evaluated, since
\[ \frac{1}{\sqrt{2\pi}} \int_{-\infty}^{\infty} 
	e^{-\frac{1}{2}x^2} e^{-i sx}   dx = e^{-\frac{1}{2} s^2} \]
and for $k = 2, \ldots, n$,
\begin{align*} 
	\frac{1}{\sqrt{2\pi}} \int_{-\infty}^{\infty} 
	e^{-\frac{1}{2}x^2} e^{-i sx} s^{k-1}  dx   = 
		\left(i \frac{d}{ds} \right)^{k-1} \left( e^{-\frac{1}{2} s^2} \right)
		= (-i)^{k-1} P_{k-1}(s) e^{-\frac{1}{2} s^2} \end{align*}
for a certain  monic polynomial $P_{k-1}$ of degree $k-1$.
Inserting this into \eqref{eq:fGUEhat1} we obtain
\begin{align}  \nonumber
	\widehat{f}_{GUE}(s) & = \frac{ i^{n(n-1)/2}}{\Delta_n(s)} 
	\det \left[ (-i)^{k-1} P_{k-1}(s_j) e^{-\frac{1}{2} s_j^2} \right]_{j,k=1}^n \\
	& = \nonumber
	\frac{ i^{n(n-1)/2}}{\Delta_n(s)} \left(\prod_{k=1}^n (-i)^{k-1} \right)
	\det \left[ P_{k-1}(s_j) \right]_{j,k=1}^n
		\left(\prod_{j=1}^n e^{-\frac{1}{2} s_j^2} \right)  \\
		& =  \label{eq:fGUEhat}
		\prod_{j=1}^n  e^{-\frac{1}{2} s_j^2},
\end{align} 
where we used the fact that for any sequence of monic polynomials 
$(P_j)_{j=0}^{n-1}$ with
$\deg P_j = j$, one has $\det \left[ P_{k-1}(s_j) \right]_{j,k=1}^n = \Delta_n(s)$.

\subsection{Laguerre Unitary Ensemble}
The Laguerre Unitary Ensemble (LUE) with parameter $\alpha > -1$ is given by
the probability density 
\begin{equation} 
	\label{eq:fLUE} f_{LUE}(L) = \frac{(\det L)^{\alpha}}
	{\pi^{n(n-1)/2} \prod_{j=1}^n \Gamma(\alpha +j)} 
	\, e^{-\Tr L} 
\end{equation}
on the set of positive definite Hermitian matrices $L$. Thus 
\[ f_{LUE}(x_1, \ldots, x_n) =  \pi^{-n(n-1)/2} 
	\prod_{j=1}^n \Gamma(\alpha+j) \,
	 \prod_{k=1}^n x_k^{\alpha} e^{-x_k} \, \one_{x_k \geq 0}. \]
By \eqref{eq:fhat} and the Andreief formula \eqref{eq:Andreief},
\begin{equation} \label{eq:fLUEhat1} 
	\widehat{f}_{LUE}(s)
	=  \frac{ i^{n(n-1)/2}}{\prod_{j=1}^n \Gamma(\alpha+j)^{-1} 
	 \Delta_n(s)} 
	 \det \left[ \int_{0}^{\infty} x^{\alpha+k-1} e^{- x} e^{-i s_jx}  dx
	  \right]_{j,k=1}^n. \end{equation} 
We compute the integrals
\[ \int_{0}^{\infty} x^{\alpha+k-1} e^{- x} e^{-i s x} dx = 
	\frac{\Gamma(\alpha+k)}{(1+is)^{\alpha+k}}  \]
and \eqref{eq:fLUEhat1} simplifies to
	\begin{align} \nonumber
	\widehat{f}_{LUE}(s)
		& = \frac{i^{n(n-1)/2}}{\Delta_n(s)} 
	 	\det \left[ \frac{1}{(1+is_j)^{\alpha+k}} \right]_{j,k=1}^n \\
	 	& = \frac{ i^{n(n-1)/2}}{ \Delta_n(s)} 
	 		\det \left[ (1+is_j)^{n-k} \right]_{j,k=1}^n
	 			\prod_{j=1}^n \frac{1}{(1+is_j)^{\alpha+n}}.
	 	\label{eq:fLUEhat2} 
	 	\end{align}
The remaining determinant in \eqref{eq:fLUEhat2} is 
$(-i)^{n(n-1)/2} \Delta_n(s)$	 	
and we find the following spherical transform for the LUE density 
\begin{equation} \label{eq:fLUEhat} 
	\widehat{f}_{LUE}(s) = 
	\prod_{j=1}^n \frac{1}{(1+i s_j)^{\alpha+n}}. 
	\end{equation}

\subsection{Polynomial ensemble}
A polynomial ensemble \cite{KuiSti2014,Kui2016} is a probability density 
on $\mathbb R^n$ of the form
\begin{equation} \label{eq:polensemble} 
	\frac{1}{Z_n} \Delta_n(x) \det \left[ w_k(x_j) \right]_{j,k=1}^n 
	\end{equation}
for some given functions $w_1, \ldots, w_n$, and a certain normalization
constant $Z_n$. 
If $X$ is a random Hermitian matrix, then we  write 
\[ X \sim PE(w_1,\ldots, w_n) \]
if the induced probability density on the eigenvalues is of the form 
\eqref{eq:polensemble}. If $X$ is unitarily invariant with
a probability density $f$, then in view of \eqref{eq:Weyldensity}
this means that
\begin{equation} \label{eq:polensemble2} 
	f(A) = f(x_1, \ldots, x_n) = \frac{1}{Z_n'} 
		\frac{\det \left[ w_k(x_j) \right]_{j,k=1}^n}{\Delta_n(x)}.
	\end{equation}

Using \eqref{eq:polensemble2} in \eqref{eq:fhat} we find for
the spherical transform of a polynomial ensemble
\begin{equation} \label{eq:fPEhat} 
\widehat{f}(s_1, \ldots, s_n) = 
	\frac{1}{Z_n'' \Delta_n(s)} \det\left[ \int_{-\infty}^{\infty} w_k(x) 
	e^{-i s_jx} dx \right]_{j,k=1}^n,
	\end{equation}
where we also used the Andreief formula \eqref{eq:Andreief}.
The normalization constant is such that $\widehat{f}(0,\ldots,0) = 1$,
which by l'Hopital's rule means that
\[ Z_n'' = \frac{(-i)^{n(n-1)/2}}{\prod_{j=0}^{n-1} j!}
	\det\left[ \int_{-\infty}^{\infty} w_k(x) x^{j-1} dx \right]_{j,k=1}^n. \]

We now recover the following result that was
proved in a different way by Claeys, Kuijlaars and Wang 
\cite[Theorem 2.1]{ClaKuiWan2015}. 
\begin{corollary} \label{cor:GUEsum}
	Let $X \sim PE(f_1, \ldots, f_n)$ be an $n \times n$ unitarily 
	invariant random Hermitian matrix for certain functions $f_1, \ldots, f_n$. 
	Let $Y$ be an $n \times n$  GUE matrix, independent of $X$. 
	Then $X+Y \sim PE(g_1, \ldots, g_n)$
	where
	\begin{equation} \label{eq:gkGUE} 
		g_k(y) = \int_{-\infty}^{\infty} e^{-\frac{1}{2}x^2} f_k(y-x) dx. 
		\end{equation}
\end{corollary}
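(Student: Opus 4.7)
The plan is to compute the spherical transform of $X+Y$ using Theorem \ref{thm:main}, and to show that the resulting expression has exactly the form \eqref{eq:fPEhat} for the weights $g_1,\ldots,g_n$ defined in \eqref{eq:gkGUE}. Uniqueness of the spherical transform (afforded by the inversion formula \eqref{eq:finverse}) then identifies $X+Y$ as a polynomial ensemble with these weights.

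First I would invoke \eqref{eq:fPEhat} to write
\[ \widehat{f}_X(s) = \frac{1}{Z_n'' \Delta_n(s)} \det\left[\int_{-\infty}^{\infty} f_k(x) e^{-i s_j x} dx\right]_{j,k=1}^n, \]
and combine it with \eqref{eq:fGUEhat} via Theorem \ref{thm:main} to obtain
\[ \widehat{f}_{X+Y}(s) = \widehat{f}_X(s)\,\widehat{f}_{GUE}(s) = \frac{1}{Z_n''\Delta_n(s)} \left(\prod_{j=1}^n e^{-\frac{1}{2}s_j^2}\right) \det\left[\int_{-\infty}^{\infty} f_k(x) e^{-is_j x}dx\right]_{j,k=1}^n. \]
The key algebraic step is that the Gaussian prefactor depends only on $s_j$ (the row index), so it can be absorbed into the $j$-th row of the determinant, yielding
\[ \widehat{f}_{X+Y}(s) = \frac{1}{Z_n''\Delta_n(s)} \det\left[ e^{-\frac{1}{2}s_j^2} \int_{-\infty}^{\infty} f_k(x) e^{-is_j x}dx \right]_{j,k=1}^n. \]

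Next I would recognize each entry as (a constant multiple of) the Fourier transform of the convolution $g_k$ from \eqref{eq:gkGUE}. Indeed, by Fubini and the standard Gaussian Fourier transform,
\[ \int_{-\infty}^{\infty} g_k(y) e^{-isy}dy = \int_{-\infty}^{\infty}\int_{-\infty}^{\infty} e^{-\frac{1}{2}x^2} f_k(y-x) e^{-isy}dx\,dy = \sqrt{2\pi}\, e^{-\frac{1}{2}s^2}\int_{-\infty}^{\infty} f_k(u) e^{-isu}du. \]
Substituting back, the matrix entries in the determinant for $\widehat{f}_{X+Y}$ become $(2\pi)^{-1/2} \int g_k(y)e^{-is_j y}dy$, so after pulling the factor $(2\pi)^{-n/2}$ out of the determinant we arrive at exactly the form \eqref{eq:fPEhat} with $f_k$ replaced by $g_k$ (and a new normalization constant).

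Finally, since the prefactor $\widehat{f}_{X+Y}(0)=1$ forces the normalization constant to be the one dictated by \eqref{eq:fPEhat} for the ensemble $PE(g_1,\ldots,g_n)$, the inversion formula \eqref{eq:finverse} recovers the density in the polynomial-ensemble form \eqref{eq:polensemble2}. I do not anticipate a serious obstacle here: the only care needed is the (harmless) bookkeeping of the normalization constant $Z_n''$ and of the factor $\sqrt{2\pi}$ picked up from the Gaussian Fourier transform, together with a mild integrability hypothesis on $f_k$ so that both $\widehat{f}_X$ and the convolutions $g_k$ are well-defined. The substantive content reduces to the single observation that the Gaussian factor $\prod_j e^{-\frac{1}{2}s_j^2}$ factors across rows and so slides inside the determinant, which is exactly what makes convolution with a GUE matrix preserve the polynomial-ensemble class.
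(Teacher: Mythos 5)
Your proposal is correct and follows essentially the same route as the paper's proof: multiply the spherical transforms via Theorem~\ref{thm:main}, slide the Gaussian factor $\prod_j e^{-s_j^2/2}$ into the rows of the determinant, recognize each entry as the Fourier transform of the convolution $g_k$, and conclude by injectivity of the spherical transform. The only cosmetic difference is that the paper works throughout up to proportionality constants, whereas you track the $\sqrt{2\pi}$ factors explicitly.
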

\begin{proof}
We have by \eqref{eq:fPEhat}
\[ \widehat{f}_X(s) \propto 
	\frac{1}{\Delta_n(s)} \det\left[ \int_{-\infty}^{\infty} f_k(x) 
	e^{-i s_jx} dx \right]_{j,k=1}^n
	\]
and by \eqref{eq:fGUEhat}
\[ \widehat{f}_Y(s) = \prod_{j=1}^n e^{-\frac{1}{2} s_j^2}. \]
Then by Theorem \ref{thm:main}
\begin{align} \nonumber 
	\widehat{f}_{X+Y}(s) & \propto
	\frac{1}{\Delta_n(s)} \det\left[ \int_{-\infty}^{\infty} f_k(x) 
	e^{-i s_jx} dx \right]_{j,k=1}^n \, \prod_{j=1}^n e^{-\frac{1}{2} s_j^2} \\
	& = \nonumber
	\frac{1}{\Delta_n(s)} 
	\det\left[ e^{-\frac{1}{2} s_j^2} \int_{-\infty}^{\infty} f_k(x) 
	e^{-i s_jx} dx \right]_{j,k=1}^n \\
	& \propto \label{eq:GUEsum1}
	\frac{1}{\Delta_n(s)} 
	\det\left[ \left( \mathcal F [e^{-\frac{1}{2} x^2}] \right)(s_j)
		\left(\mathcal F f_k \right)(s_j)  \right]_{j,k=1}^n 
	\end{align}
where $\mathcal F$ is the Fourier transform 
\begin{equation} \label{eq:FT}
	(\mathcal F w)(s) = 
		\int_{-\infty}^{\infty} w(x) e^{-isx} dx.
\end{equation}
The function $g_k$ from \eqref{eq:gkGUE} is the convolution of $f_k$ with
$x \mapsto e^{-\frac{1}{2}x^2}$ and by elementary properties of the
Fourier transform it follows from \eqref{eq:GUEsum1}
\begin{align*}
	\widehat{f}_{X+Y}(s) \propto
	\frac{1}{\Delta_n(s)} 
	\det\left[ \mathcal F g_k(s_j) \right]_{j,k=1}^n 
\end{align*}
which by \eqref{eq:fPEhat} and the injectivity of the spherical transform
means that $X+Y \sim PE(g_1, \ldots, g_n)$ as claimed.
\end{proof}

In the same way we can combine \eqref{eq:fPEhat} and \eqref{eq:fLUEhat}
and find the following new result about addition of an LUE matrix. 
\begin{corollary} \label{cor:LUEsum}
	Let $X$ be an $n \times n$ unitarily 
	invariant random Hermitian matrix such that $X \sim PE(f_1, \ldots, f_n)$
	for certain functions $f_1, \ldots, f_n$. 
	Let $L$ be an $n \times n$  LUE matrix with parameter $\alpha$, 
	independent of $X$. 
	Then $X+L \sim PE(g_1, \ldots, g_n)$
	where
	\begin{equation} \label{eq:gkLUE} 
		g_k(y) = \int_{0}^{\infty} x^{\alpha+n-1} e^{-x} f_k(y-x) \, dx. 
	\end{equation}
\end{corollary}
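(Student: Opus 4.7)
The strategy is to mimic the proof of Corollary \ref{cor:GUEsum}, replacing the Gaussian factor by the relevant Laguerre factor. Writing $X \sim PE(f_1,\ldots,f_n)$ gives by \eqref{eq:fPEhat}
\[
\widehat{f}_X(s) \propto \frac{1}{\Delta_n(s)}
	\det\left[ (\mathcal F f_k)(s_j) \right]_{j,k=1}^n,
\]
where $\mathcal F$ is the Fourier transform \eqref{eq:FT}, and by \eqref{eq:fLUEhat}
\[
\widehat{f}_L(s) = \prod_{j=1}^n \frac{1}{(1+is_j)^{\alpha+n}}.
\]
Theorem \ref{thm:main} then yields $\widehat{f}_{X+L} = \widehat{f}_X \cdot \widehat{f}_L$, and the $n$ factors $(1+is_j)^{-(\alpha+n)}$ can be absorbed into the $j$-th row of the determinant.

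The key observation is that this factor is (up to the constant $\Gamma(\alpha+n)$) itself a Fourier transform: from the integral already computed in \eqref{eq:fLUEhat1}, one has
\[
\mathcal F\!\left[ x^{\alpha+n-1} e^{-x} \, \one_{x\geq 0} \right](s)
	= \frac{\Gamma(\alpha+n)}{(1+is)^{\alpha+n}}.
\]
Since $g_k$ defined by \eqref{eq:gkLUE} is (up to the same constant) the convolution of $f_k$ with $x \mapsto x^{\alpha+n-1} e^{-x} \one_{x \geq 0}$, the elementary convolution rule for the Fourier transform gives
\[
(\mathcal F g_k)(s_j) \propto \frac{1}{(1+is_j)^{\alpha+n}} \, (\mathcal F f_k)(s_j).
\]
Substituting this into the $(j,k)$-entry of the determinant produces
\[
\widehat{f}_{X+L}(s) \propto \frac{1}{\Delta_n(s)}
	\det\left[ (\mathcal F g_k)(s_j) \right]_{j,k=1}^n,
\]
which by \eqref{eq:fPEhat} is precisely the spherical transform of a polynomial ensemble with weights $g_1,\ldots,g_n$. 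The injectivity of the spherical transform (a consequence of the inversion formula \eqref{eq:finverse}) then gives $X+L \sim PE(g_1,\ldots,g_n)$.

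Essentially no step is an obstacle; the only point demanding minor care is the bookkeeping of the constants $\Gamma(\alpha+n)$ that appear when one writes $(1+is_j)^{-(\alpha+n)}$ as $\mathcal F$ of a normalized density. These constants contribute an overall factor independent of $s$ and $k$, which is absorbed in the proportionality and fixed by the normalization $\widehat{f}_{X+L}(0,\ldots,0)=1$, so they do not affect the conclusion. No assumption on $f_k$ beyond that needed for $X \sim PE(f_1,\ldots,f_n)$ to make sense is required, since the convolution in \eqref{eq:gkLUE} is against an exponentially decaying, compactly supported-from-below weight.
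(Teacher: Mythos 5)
Your proof is correct and is essentially the same as the paper's: the paper simply says ``the proof is exactly the same as the proof of Corollary~\ref{cor:GUEsum}'' using \eqref{eq:fLUEhat} in place of \eqref{eq:fGUEhat} and the Fourier transform $\mathcal F[x^{\alpha+n-1}e^{-x}\one_{x\geq 0}](s) = \Gamma(\alpha+n)/(1+is)^{\alpha+n}$, which is exactly what you spell out. (You have even implicitly corrected a typo in the paper's statement of that Fourier transform, where the exponent in the denominator is misprinted as $n$ instead of $\alpha+n$.)
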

\begin{proof}
The proof is exactly the same as the proof of Corollary \ref{cor:LUEsum}.
We only use \eqref{eq:fLUEhat} instead of \eqref{eq:fGUEhat} and the
fact that
\[  \mathcal F[x^{\alpha+n-1} e^{-x} \one_{x\geq 0}](s) = 	
		  \frac{\Gamma(\alpha+n)}{(1+is)^{n}}. \]
where $\mathcal F$ is the Fourier transform as in \eqref{eq:FT}.
\end{proof}
There is an interesting alternative proof of Corollary 
\ref{cor:LUEsum}, along the lines of the proof in \cite{ClaKuiWan2015} 
of Corollary \ref{cor:GUEsum}, which we give in Section 4.

\subsection{Polynomial ensemble of derivative type}

Polynomial ensembles of derivative type were introduced by
Kieburg and K\"osters in \cite{KieKos2016a,KieKos2016b} in the 
connection with products of random matrices.
There is an analogous notion that is relevant for sums of random
matrices, and we call it polynomial ensemble of derivative type (in the
additive sense).

The polynomial ensemble \eqref{eq:polensemble} is of derivative type
(in the additive sense) if 
\[ \spn\{w_1, \ldots, w_n\} = \spn\{ w^{(k)} \mid k = 0, \ldots, n-1 \} \]
for some function $w$. In that case, we may use elementary  column
transformations to the determinant in \eqref{eq:polensemble} to
pass from $w_1, \ldots, w_n$ to $w, w', \ldots, w^{(n-1)}$ (with a possibly
different normalization constant).
The corresponding probability density \eqref{eq:polensemble2}  
then is
\begin{equation} \label{eq:polensembleDT} 
	f(A) = f(x_1, \ldots, x_n) \propto  \frac{\det \left[ w^{(k-1)}(x_j) \right]_{j,k=1}^n}{\Delta_n(x)}
	\end{equation}
and the density on eigenvalues is 
\begin{equation} \label{eq:polensembleDT2} 
	 \frac{1}{Z_n} \, \Delta_n(x) \, \det\left[ w^{(k-1)}(x_j)\right]_{j,k=1}^n
	\end{equation}
The  spherical transform \eqref{eq:fPEhat} simplifies
in this case since
\[ \int_{-\infty}^{\infty} w^{(k-1)}(x) e^{-i sx} dx = 
	(is)^{k-1} \int_{-\infty}^{\infty} w(x) e^{-isx} dx \]
and
\begin{align} \nonumber
	\widehat{f}(s_1, \ldots, s_n) & \propto
		\frac{1}{\Delta_n(s)} \det \left[ (is_j)^{k-1} \right]_{j,k=1}^n
			\prod_{j=1}^n \int_{-\infty}^{\infty} w(x) e^{-i s_jx} dx \\
			& \propto \prod_{j=1}^n \int_{-\infty}^{\infty} w(x) e^{- is_jx} dx.
 \label{eq:fDPEhat1} 
\end{align}	
The proportionality constant follows from the property 
$\widehat{f}(0, \ldots, 0) = 1$. Thus for a polynomial ensemble of
derivative type (in the additive sense) the spherical transform factorizes as 
\begin{equation} \label{eq:fDPEhat} 
	\widehat{f}(s_1, \ldots, s_n) 
	= 
	\frac{1}{(\int_{-\infty}^{\infty} w(x) dx)^n} \prod_{j=1}^n (\mathcal F w)(s_j) 
\end{equation}
where $\mathcal F$ is again the Fourier transform \eqref{eq:FT}.
The formula \eqref{eq:fDPEhat} is similar to \cite[Corollary 3.2]{KieKos2016b}
that applies to the multiplicative setting where  the Mellin transform 
is used instead of the Fourier transform.  

We write $X \sim DPE(w)$ if $X$ is a random Hermitian matrix whose eigenvalues are
a polynomial ensemble as in \eqref{eq:polensembleDT} with function $w$. 
The following is now almost immediate. It is the analogue of  
\cite[Corollary 3.4]{KieKos2016b} in the additive setting.
\begin{corollary}
If $X$ and $Y$ are independent unitarily invariant random Hermitian matrices
whose eigenvalues are polynomial ensembles of derivative type 
(in the additive sense),
say $X \sim DPE(w_1)$ and $Y \sim DPE(w_2)$, then $X + Y$
is a random matrix whose eigenvalues are a polynomial ensemble
of derivative type (in the additive sense)
\[ X + Y \sim DPE(w_1 \ast w_2),  \]
where $\ast$ denotes the usual convolution of  functions on the real line
\[ w_1 \ast w_2(x) = \int_{-\infty}^{\infty} w_1(x-y) w_2(y) dy \]
\end{corollary}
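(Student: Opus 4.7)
The plan is to reduce the corollary to the factorization property of the spherical transform (Theorem \ref{thm:main}) combined with the fact, recorded in \eqref{eq:fDPEhat}, that the spherical transform of a $DPE(w)$-matrix is, up to normalization, a product of one-variable Fourier transforms of $w$.

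First I would compute $\widehat{f}_X$ and $\widehat{f}_Y$ using \eqref{eq:fDPEhat}: for $j=1,2$,
\[
 \widehat{f}_{X_j}(s_1,\ldots,s_n) = \frac{1}{(\int_{-\infty}^{\infty} w_j(x)\,dx)^n} \prod_{j'=1}^{n} (\mathcal F w_j)(s_{j'}).
\]
By Theorem \ref{thm:main}, $\widehat{f}_{X+Y} = \widehat{f}_X \cdot \widehat{f}_Y$, so the spherical transform of $X+Y$ factorizes as
\[
 \widehat{f}_{X+Y}(s_1,\ldots,s_n) = \frac{1}{(\int w_1)^n (\int w_2)^n} \prod_{j'=1}^{n} (\mathcal F w_1)(s_{j'}) (\mathcal F w_2)(s_{j'}).
\]

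Next I would invoke the convolution theorem for the ordinary Fourier transform, $(\mathcal F(w_1 \ast w_2))(s) = (\mathcal F w_1)(s)(\mathcal F w_2)(s)$, together with the identity $\int (w_1 \ast w_2)(x)\,dx = (\int w_1)(\int w_2)$ (which is just Fubini). Setting $w = w_1 \ast w_2$, both the product and the normalization match, giving
\[
 \widehat{f}_{X+Y}(s_1,\ldots,s_n) = \frac{1}{(\int w(x)\,dx)^n} \prod_{j'=1}^{n} (\mathcal F w)(s_{j'}),
\]
which is exactly the spherical transform of a $DPE(w_1 \ast w_2)$ density by \eqref{eq:fDPEhat}.

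The final step is to conclude $X+Y \sim DPE(w_1 \ast w_2)$ by injectivity of the spherical transform on unitarily invariant densities, which underlies the proofs of Corollaries \ref{cor:GUEsum} and \ref{cor:LUEsum} and follows from the inversion formula \eqref{eq:finverse}. There is no real obstacle: the only thing to check carefully is that the normalizations line up, and the Fubini identity for $\int (w_1 \ast w_2)$ handles this cleanly. If one wishes, the result can equivalently be phrased in terms of probability densities: up to the normalization factor, the matrix density \eqref{eq:polensembleDT} for $X+Y$ is obtained from $w_1 \ast w_2$ and its derivatives, consistent with the additive convolution interpretation.
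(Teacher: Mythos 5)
Your proposal is correct and follows exactly the route the paper indicates: the paper's proof is the one-line remark ``This follows from Theorem \ref{thm:main}, \eqref{eq:fDPEhat}, and the basic properties of the Fourier transform,'' and you have simply unpacked that argument in full detail, including the normalization check via Fubini and the concluding appeal to injectivity of the spherical transform.
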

\begin{proof}
This follows from Theorem \ref{thm:main},
\eqref{eq:fDPEhat}, and the basic properties of the Fourier transform. 
\end{proof}

Using \eqref{eq:fPEhat} and \eqref{eq:fDPEhat} in Theorem \ref{thm:main}
we obtain the following result.
\begin{corollary}
If $X$ and $Y$ are independent unitarily invariant random Hermitian matrices
with the eigenvalues of $X$ a polynomial ensembles of derivative type
(in the additive sense)
say $X \sim DPE(w)$ and the eigenvalues of $Y$ a polynomial ensemble
$Y \sim PE(w_1, \ldots, w_n)$, then $X + Y$
is a random matrix whose eigenvalues are a polynomial ensemble
\begin{equation} \label{eq:PEplusDPE} 
	X + Y \sim PE(w \ast w_1, \ldots, w \ast w_n). 
	\end{equation}
\end{corollary}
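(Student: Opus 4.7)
The plan is to combine Theorem \ref{thm:main} with the explicit forms of the spherical transform in \eqref{eq:fPEhat} and \eqref{eq:fDPEhat}, and then use the standard convolution property of the Fourier transform to recognize the product as the spherical transform of a polynomial ensemble with the claimed densities.

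First, I would write down the two spherical transforms separately. By \eqref{eq:fDPEhat}, since $X \sim DPE(w)$,
\[
\widehat{f}_X(s) \;\propto\; \prod_{j=1}^n (\mathcal F w)(s_j),
\]
and by \eqref{eq:fPEhat}, since $Y \sim PE(w_1, \ldots, w_n)$,
\[
\widehat{f}_Y(s) \;\propto\; \frac{1}{\Delta_n(s)} \det\left[ (\mathcal F w_k)(s_j) \right]_{j,k=1}^n,
\]
where $\mathcal F$ denotes the Fourier transform \eqref{eq:FT} and the suppressed proportionality constants are independent of $s$.

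Next, I would multiply these two expressions, invoking Theorem \ref{thm:main} to identify the product with $\widehat{f}_{X+Y}(s)$. The key manipulation is to absorb the factor $\prod_{j} (\mathcal F w)(s_j)$ into the determinant by pulling one copy of $(\mathcal F w)(s_j)$ into the $j$-th row, yielding
\[
\widehat{f}_{X+Y}(s) \;\propto\; \frac{1}{\Delta_n(s)} \det\left[ (\mathcal F w)(s_j)\,(\mathcal F w_k)(s_j) \right]_{j,k=1}^n.
\]
Using the convolution-to-product property $\mathcal F(w \ast w_k) = (\mathcal F w)(\mathcal F w_k)$, this becomes
\[
\widehat{f}_{X+Y}(s) \;\propto\; \frac{1}{\Delta_n(s)} \det\left[ (\mathcal F(w \ast w_k))(s_j) \right]_{j,k=1}^n,
\]
which is precisely the form \eqref{eq:fPEhat} for the polynomial ensemble $PE(w \ast w_1, \ldots, w \ast w_n)$. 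Matching normalizations is automatic once one imposes $\widehat{f}_{X+Y}(0,\ldots,0) = 1$.

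Finally, I would invoke injectivity of the spherical transform (guaranteed by the inversion formula \eqref{eq:finverse}) to conclude $X+Y \sim PE(w \ast w_1, \ldots, w \ast w_n)$, which is \eqref{eq:PEplusDPE}. I do not expect any serious obstacle: the only mildly delicate point is the row-by-row absorption of the product into the determinant, and the implicit assumption that $w, w_1, \ldots, w_n$ decay sufficiently at infinity for their Fourier transforms (and those of the convolutions) to make sense, which is already built in by the assumption that the underlying matrix ensembles have integrable densities.
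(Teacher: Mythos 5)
Your proposal is correct and is exactly the argument the paper has in mind: the paper omits this proof as ``very similar'' to that of Corollary~\ref{cor:GUEsum}, and your write-up is precisely that template, replacing the GUE spherical transform \eqref{eq:fGUEhat} by the factorized form \eqref{eq:fDPEhat} for a derivative-type ensemble and then absorbing the product $\prod_j (\mathcal F w)(s_j)$ row-by-row into the determinant before invoking the Fourier convolution identity and injectivity of the spherical transform.
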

\begin{proof}
This is very similar, and we omit the proof. 
See \cite[Theorem 3.3]{KieKos2016b} for the analogous result 
in the multiplicative setting.
\end{proof}

\begin{remark}
If a function $w$ generates a polynomial ensemble \eqref{eq:polensembleDT2}
then clearly \eqref{eq:polensembleDT2} should be nonnegative for 
every choice of $x_1, x_2, \ldots, x_n$. This property is 
satisfied by so-called P\'olya frequency functions, see e.g.\ \cite{Kar1968}, 
for which it holds that
\[ \det \left[ w^{(k-1)}(x_j) \right] \geq 0 \]
whenever $x_1 < x_2 < \cdots < x_n$. Faraut \cite{Far2006} has
an interesting survey that connects P\'olya frequency functions
to the representation theory of Gelfand pairs, 
including $(\Un \ltimes \Hermn, \Un)$.
\end{remark}

\section{More on addition with LUE matrix}
\subsection{Alternative proof of Corollary \ref{cor:LUEsum}}
We give a different proof of Corollary \ref{cor:LUEsum}, based on a change 
of variables.
When integrating out the eigenvectors of $Y$ we encounter a matrix integral
over the unitary group that was recently evaluated 
by Kieburg, Kuijlaars and Stivigny in \cite{KieKuiSti2016}. 

Assume first that $X$ is fixed with eigenvalues $x_1, \ldots, x_n$ and $L$ is
an LUE matrix with parameter $\alpha > -1$.
Then $L \mapsto Y= X+L $ is a change of variables. 
From \eqref{eq:fLUE}  we arrive at the probability density
\[ \propto \det (Y-X)^{\alpha} e^{-\Tr (Y-X)} \] 
on the set of Hermitian matrices $Y$ with $Y \geq X$. Letting $y_1, \ldots, y_n$
be the eigenvalues of $Y$, this is 
\begin{equation} \label{eq:LUEsum1} 
	\propto \left( \prod_{j=1}^n e^{-y_j} \right) 
	\left( \prod_{k=1}^n e^{x_k} \right) 
	\det (Y-X)^{\alpha} \, \one_{Y \geq X}.
	\end{equation}

Introduce the eigenvalue decomposition $Y = U D U^*$ with a diagonal
matrix $D = \diag(y_1, \ldots, y_n)$ and a unitary matrix $U$.
The Jacobian of the eigenvalue decomposition is proportional to $\Delta_n(y)^2$,
and we obtain from \eqref{eq:LUEsum1} for the density of eigenvalues
\begin{equation} \label{eq:LUEsum2} 
	 \propto \Delta_n(y)^2   \left( \prod_{j=1}^n e^{-y_j} \right) 
	 \left( \prod_{k=1}^n e^{x_k} \right)
	\int\limits_{U \in \Un : U DU^* \geq X} \det(UDU^* - X)^{\alpha} \,  dU 
	\end{equation}
with a proportionality constant that does not depend on $X$.
The integral over the subset of the unitary group was evaluated in 
\cite[Theorem 2.3]{KieKuiSti2016} where it was found to be proportional to
\[   \frac{ \det \left[ (y_j - x_k)_+^{\alpha+n-1} \right]_{j,k=1}^n}
{\Delta_n(x) \Delta_n(y)}.   \]
Here $(y-x)_+ = \max(y-x,0)$.
The density \eqref{eq:LUEsum2} therefore is
\begin{equation} \label{eq:LUEsum3} 
	\propto  \frac{\Delta_n(y)}{\Delta_n(x)}  \left(\prod_{j=1}^n e^{-y_j}\right) 
		\left( \prod_{k=1}^n e^{x_k} \right)
	\det \left[ (y_j - x_k)_+^{\alpha+n-1} \right]_{j,k=1}^n.
	\end{equation}
We bring the prefactors into the determinant and obtain
\begin{equation} \label{eq:LUEsum4} 
	\propto  \frac{\Delta_n(y)}{\Delta_n(x)}
	\det \left[  (y_j - x_k)_+^{\alpha+n-1} e^{-y_j + x_k} \right]_{j,k=1}^n. 
	\end{equation}
This is the result for a fixed matrix $X$ and \eqref{eq:LUEsum4} shows that the
 eigenvalues of $Y = X+L$ are a polynomial ensemble.

Now suppose that $X$ is random and its eigenvalues are a polynomial ensemble
\eqref{eq:polensemble}. Then by the Andreief formula \eqref{eq:Andreief}, 
we find from \eqref{eq:LUEsum4} after averaging over \eqref{eq:polensemble},
\[ \propto \Delta_n(y) 
	\det \left[ \int_{-\infty}^{\infty}  (y_j-x)_+^{\alpha+n-1}  e^{-y_j+x} 
	f_k(x) dx \right]_{j,k=1}^n. \]
We change variables $x \mapsto y_j-x$ and arrive at
\begin{equation} \label{eq:LUEsum5} 
	\propto \Delta_n(y)  \det 
	\left[ \int_{0}^{\infty} x^{\alpha+n-1} e^{-x} f_k(y_j-x) dx
	 \right]_{j,k=1}^n
	\end{equation}
which is indeed a polynomial ensemble with the functions \eqref{eq:gkLUE} 	
as claimed in Corollary \ref{cor:LUEsum}.
	\qed

\subsection{Biorthogonal functions}
Corollary \ref{cor:LUEsum} is a transformation result  for polynomial
ensembles. 
A polynomial ensemble is a special case of a determinantal point process. The
correlation kernel for a polynomial ensemble
\[ \frac{1}{Z_n} \Delta_n(x) \, \det\left[ f_j(x_k) \right]_{j,k=1}^n \] 
takes the form
\[ \sum_{k=0}^{n-1} p_k(x) q_k(y) \]
where each $p_k$ is a monic  polynomial of degree $k$, each $q_k$ belongs to the
linear span of $f_1, \ldots, f_n$ and the biorthogonality condition
\begin{equation} \label{eq:biorthogonal} 
	\int_{-\infty}^{\infty} p_j(x) q_k(x) dx = \delta_{j,k} 
	\end{equation}
holds, see \cite{Bor1998,For2010}.

Suppose $Y = X + L$ where $X \sim PE(f_1, \ldots, f_n)$ and 
$Y \sim PE(g_1, \ldots, g_n)$ as in Corollary \ref{cor:LUEsum}. We write $K_n^X$ 
and $K_n^Y$ for the correlation kernels of the two polynomial
ensembles and 
\begin{align} \label{eq:KnX} 
	K_n^X(x,y) & = \sum_{k=0}^{n-1} p_k(x) q_k(y), \\
    K_n^Y(x,y) & = \sum_{k=0}^{n-1} P_k(x) Q_k(y), 
    \end{align}
and we investigate the relation between the two sets of biorthogonal functions.
The transformation results are in formulas \eqref{eq:Qkdef}, \eqref{eq:Pkintegral},
and \eqref{eq:KnY} below.
See \cite{ClaKuiWan2015} for similar results related to addition with a GUE matrix.

We assume $p_k$ and $q_k$ are given. We fix $Q_k$ by taking
\begin{equation} \label{eq:Qkdef} 
	Q_k(y) = \frac{1}{\Gamma(\alpha+n)} \int_{0}^{\infty}  
	x^{\alpha+n-1} e^{-x} q_k(y-x) \, dx, 
	\qquad k = 0, \ldots, n-1. \end{equation}
Then the $Q_k$ are in the linear span of $g_1, \ldots, g_n$
because of \eqref{eq:gkLUE} and the fact that each $q_k$ is in the linear span 
of $f_1, \ldots, f_n$.
We want to find polynomials $P_0, \ldots, P_{n-1}$ such that
\[ \int_{-\infty}^{\infty} P_j(x) Q_k(x) dx = \delta_{j,k}, \]
and $\deg P_j = j$ for $j =0, \ldots, n-1$.

We calculate from \eqref{eq:Qkdef} (for an as yet unknown $P_j$)
\[ \int_{-\infty}^{\infty} P_j(x) Q_k(x) dx 
	= \frac{1}{\Gamma(\alpha+n)} \int_{-\infty}^{\infty} 
		\int_0^{\infty} t^{\alpha+n-1} e^{-t} P_j(x) q_k(x-t) \, dt \, dx \]
and make the changes of variables $t' = x-t$, $s' = t$. Then by Fubini's theorem
\begin{equation} \label{eq:PjQkintegral} 
	\int_{-\infty}^{\infty} P_j(x) Q_k(x) dx 
	= \frac{1}{\Gamma(\alpha+n)} \int_{-\infty}^{\infty} \left(
		\int_0^{\infty} s^{\alpha+n-1} e^{-s} P_j(t+s) ds \right)  q_k(t) \, dt. 
		\end{equation}
We want to choose $P_k$ such that
\begin{equation} \label{eq:Pjwant} 	
	\frac{1}{\Gamma(\alpha+n)} \int_0^{\infty} s^{\alpha+n-1} e^{-s} P_k(x+s) ds 
	= p_k(x) 
	\end{equation}
for $k=0, 1, \ldots, n-1$, since then by \eqref{eq:PjQkintegral}
and the biorthogonality for the $p_j$'s and $q_k$'s,
\[ \int_{-\infty}^{\infty} P_j(x) Q_k(x) dx = \int_{-\infty}^{\infty} p_j(t) 
	q_k(t) dt = \delta_{j,k}. \]

The mapping $\mathcal L : f \mapsto \mathcal Lf$ with
\begin{equation} \label{eq:Lfx}
	\mathcal Lf(x) = \frac{1}{\Gamma(\alpha+n)} \int_0^{\infty} s^{\alpha+n-1} 
	e^{-s} f(x+s) ds 
	\end{equation}
maps polynomials to polynomials of the same degree and the same leading
coefficient.  
Consider the polynomials $e_k(x) = \frac{1}{k!} x^k$. We have by \eqref{eq:Lfx}
and the binomial theorem
\begin{align*} 
	\mathcal L e_k(x) & = \frac{1}{k! \Gamma(\alpha+n)} 
	\int_0^{\infty} s^{\alpha+n-1} e^{-s} (x+s)^k ds  \\
	&= \sum_{j=0}^k \frac{1}{j! (k-j)! \Gamma(\alpha+n)} 
	\int_0^{\infty} s^{\alpha+n-1} e^{-s} x^{k-j} s^j ds \\
	& = \sum_{j=0}^k \frac{\Gamma(\alpha+n+j)}{j! (k-j)! \Gamma(\alpha+n)} 
	  x^{k-j}  \\
	& = \sum_{j=0}^k \binom{\alpha+n+j-1}{j} e_{k-j}(x) 
	\end{align*}
Since $e_{k-j} = e_k^{(j)}$, we conclude that 
\begin{equation} \label{eq:Lfx2} 
	\mathcal L f = \sum_{j=0}^{\infty} \binom{\alpha+n+j-1}{j} f^{(j)} 
	\end{equation}
if $f$ is one of the functions $e_k$, and then by linearity for 
arbitrary polynomials $f$. Note that \eqref{eq:Lfx2} is a
finite sum if $f$ is a polynomial.

To invert $\mathcal L$ we need a sequence $(a_k)_k$ such that $a_0 = 0$ and
\begin{equation} \label{eq:Linv1}  
	\sum_{j=0}^k a_{k-j} \binom{\alpha + n + j-1}{j} = 0, \qquad k = 1,2, \ldots, 
	\end{equation}
since then we can put
\begin{equation} \label{eq:Linv2} 
	\mathcal L^{-1} f = \sum_{k=0}^{\infty} a_k f^{(k)}. \end{equation}
The recurrence \eqref{eq:Linv1} is solved by the numbers
\begin{equation} \label{eq:akdef} 
	a_k =  (-1)^k \binom{\alpha+n}{k}. 
	\end{equation}
Indeed, note that
\[ (-1)^j \binom{\alpha+n+j-1}{j} = \binom{-\alpha-n}{j} \]
and then
\begin{align*} 
	\sum_{j=0}^k (-1)^{k-j} \binom{\alpha+n}{k-j} \binom{\alpha + n + j-1}{j} & = 
	(-1)^k \sum_{j=0}^k \binom{\alpha+n}{k-j} \binom{-\alpha-n}{j} \\
	& = (-1)^k \binom{0}{k} \end{align*}
by the Chu-Vandermonde identity.

The conclusion from \eqref{eq:Lfx}, \eqref{eq:Linv2}
and \eqref{eq:akdef}  is that 
\begin{equation} \label{eq:Pkdef} 
	P_k(x) = \sum_{j=0}^{k} (-1)^j \binom{\alpha+n}{j} p_k^{(j)}(x), 
	\end{equation}
is indeed a monic polynomial of degree $k$ that satisfies \eqref{eq:Pjwant}
and the biorthogonality property
\[ \int_{-\infty}^{\infty} P_j(x) Q_k(x) dx = \delta_{j,k}, 
	\qquad j,k=0, \ldots, n-1 \]
holds.	

There is a contour integral formula for $P_k$
\begin{equation} \label{eq:Pkintegral} 
	P_k(x) =  \frac{\Gamma(\alpha+n+1)}{2\pi i}
	\int_C \frac{p_k(x-s)}{s^{\alpha+n+1}} e^s ds \end{equation}
where $C$ is a contour encircling the negative real axis,
starting at $-\infty$ in the lower half plane, and
ending at $-\infty$  in the upper half plane. [It can be taken to
be closed contour around the origin if $\alpha$ is an integer.]
To prove the integral
formula \eqref{eq:Pkintegral} we define a mapping $\mathcal M$ by 
\[ \mathcal M f (x) = \frac{\Gamma(\alpha+n+1)}{2\pi i}
	\int_C \frac{f(x-s)}{s^{\alpha+n+1}} e^s ds \]
and we evaluate $\mathcal M e_k$ where $e_k(x) = \frac{1}{k!} x^k$ as before. 
By the binomial theorem
\[ \mathcal Me_k = \frac{\Gamma(\alpha+n+1)}{2\pi i k!}
	\int_C \sum_{j=0}^k (-1)^j x^{k-j} \binom{k}{j} \frac{1}{s^{\alpha+n-j+1}} 
	e^s ds \]
which is
\[ \sum_{j=0}^k \frac{(-1)^j}{j!} e_{k-j} 
	\frac{\Gamma(\alpha+n+1)}{\Gamma(\alpha+n-j+1)} \] 
and this is
\[ \sum_{j=0}^k (-1)^j \binom{\alpha+n}{j} e_k^{(j)}. \]
By linearity we have
\[ \mathcal M f(x) = \sum_{j=0}^{\infty} (-1)^j \binom{\alpha+n}{j} f^{(j)} \]
for every polynomial $f$, which indeed coincides with the formula 
\eqref{eq:Linv2}-\eqref{eq:akdef} 
for $\mathcal L^{-1}$.

For the correlation kernels we arrive at the transformation formula
\begin{align} \nonumber 
	K_n^Y(x,y) & = \sum_{k=0}^{n-1} P_k(x) Q_k(y) \\
	& = \frac{\alpha +n}{2\pi i} \int_C
		\int_0^{\infty}  \left(\frac{t}{s} \right)^{\alpha+n} e^{s-t} 
		K_n^X(x-s,y-t) \frac{dt}{t} \frac{ds}{s}, 
	\label{eq:KnY}	
\end{align}
which could be useful for asymptotic analysis. A similar formula for the case of 
a sum with
a GUE matrix was given in \cite{ClaKuiWan2015} and it was used for asymptotic
 analysis in \cite{ClaDoe2016} and \cite{ClaKuiLieWan2016}.

\paragraph{Acknowledgements}
The first author is very grateful to Mario Kieburg and Holger K\"osters 
for inspiring
discussions related to their papers \cite{KieKos2016a,KieKos2016b}. 

Arno Kuijlaars is supported by long term structural funding-Methusalem 
grant of the Flemish Government, by the Belgian Interuniversity Attraction 
Pole P07/18,  by KU Leuven Research Grant OT/12/073, and by FWO Flanders 
projects G.0934.13 and G.0864.16.

The research of Pablo Rom\'an is supported by the
Radboud Excellence Fellowship. P. Rom\'an was partially supported by CONICET
grant PIP 112-200801-01533, FONCyT grant PICT 2014-3452 and by SECyT-UNC.

\end{document}